\newtheorem{theorem}{Theorem}[section]
\newtheorem{lem}[theorem]{Lemma}
\newtheorem{prop}[theorem]{Proposition}
\newtheorem{cor}[theorem]{Corollary}
\newtheorem{defn}[theorem]{Definition}
\newcommand{\bb}{{\bf b}}
\newcommand{\bc}{{\bf c}}
\newcommand{\bg}{{\bf g}}
\newcommand{\bba}{{\mathbb{A}}}
\newcommand{\bbp}{{\mathbb{P}}}
\newcommand{\bbq}{{\mathbb{Q}}}
\newcommand{\bbr}{{\mathbb{R}}}
\newcommand{\bbz}{{\mathbb{Z}}}
\newcommand{\Spec}{\operatorname{Spec}\,}
\title{Waring's problem for rational functions in one variable}
\author[Bo-Hae Im]{Bo-Hae Im}
\address{Department of Mathematical Sciences, KAIST, 291 Daehak-ro, Yuseong-gu, Daejeon, 34141, South Korea}
\email{bhim@kaist.ac.kr}
\subjclass[2010]{Primary 11P05}
\author[Michael Larsen]{Michael Larsen}
\address{Department of Mathematics, Indiana University, Bloomington,
Indiana 47405, USA}
\email{mjlarsen@indiana.edu}
\date{\today}
\thanks{Bo-Hae Im was supported by Basic Science Research Program through the National Research Foundation of Korea(NRF) funded by the Ministry of  Science, ICT \& Future Planning(NRF-2017R1A2B4002619). Michael Larsen was partially supported by the National Science Foundation.}
\begin{document}

\maketitle
\begin{abstract}
Let $f\in \bbq(x)$ be a non-constant rational function.  We consider ``Waring's Problem for $f(x)$,'' i.e., whether every element of $\bbq$ can be written as a bounded sum of elements of $\{f(a)\mid a\in \bbq\}$.  For rational functions of degree $2$, we give necessary and sufficient conditions.  For higher degrees, we prove that every polynomial of odd degree and every odd Laurent polynomial satisfies Waring's Problem.  We also consider the ``Easier Waring's Problem'': whether every element of $\bbq$ can be  represented as a bounded sum of elements of $\{\pm f(a)\mid a\in \bbq\}$.
\end{abstract}

\section{Introduction}
The classical Waring's Problem (WP) asks if, for every positive integer $d$, there exists $N$ such that every natural number can be written as the sum of $N$ $d$th powers of natural numbers.  This was settled in the affirmative by Hilbert \cite{Hilbert}.  Shortly afterward, Erich Kamke \cite{Kamke} proved that for every polynomial $f(x)\in \bbz[x]$ with positive leading coefficient there exists $N$ such that every sufficiently large integer satisfying an obvious congruence condition (depending on $f(x)$) can be written as a sum of $N$ values  of the form $f(x_i)$, where the $x_i$ are natural numbers.

In this paper, we propose to consider the analogous problem for rational functions.  Since in this setting, we can in general only expect $f(x_i)$ to belong to $\bbq$, we consider the question of whether every rational number, or every sufficiently positive rational number, can be written as a sum of values $f(x_i)$, $x_i\in \bbq$.

In 1934, Edward Wright \cite{Wright} introduced the Easier Waring's Problem (EWP): to represent an integer as a sum or difference of a fixed number of $d$th powers,
i.e., as $\pm x_1^d\pm x_2^d \cdots \pm x_N^d$.  We consider both the original WP and the EWP for rational functions.  We give necessary and sufficient conditions for solubility of these problems, but these conditions are quite far apart.  For most rational functions, we do not know whether either version of WP is soluble.

This paper was written simultaneously with the paper \cite{LN} of the second named author and Dong Quan Ngoc Nguyen on Waring's Problem for unipotent algebraic groups over number fields.  Since a basic idea behind that paper is that the proper setting for Waring-type problems is polynomial-valued maps,  the fact that one can prove such results for Laurent polynomials came as something of a surprise.

The second named author would like to acknowledge useful conversations with Nguyen related to the subject of this paper.

\section{Generalities}

Throughout this section, $X$ denotes a subset of  $\bbq$.
\begin{defn}
We say $X$ is a \emph{base} (resp. \emph{positive base}, \emph{negative base}, or \emph{open base}) if for some positive integer $N$,
$$\underbrace{X+X+\cdots+X}_N$$
is all of $\bbq$ (resp. contains $(a,\infty)\cap \bbq$ for some $a$, contains $(-\infty,b)\cap \bbq$ for some $b$, or contains $(a,b)\cap \bbq$ for some $a<b$).
We say $X$ is a \emph{virtual base} if
$$\underbrace{\pm X\pm X\pm \cdots\pm X}_N = \bbq.$$
\end{defn}

Clearly, all of these properties are invariant under translation of $X$ or multiplication of $X$ by any positive rational scale factor.
The following properties are also immediate:

\begin{lem}
\label{obvious}
For $X\subseteq \bbq$, we have
\begin{enumerate}[$(a)$]
\item If $X$ is a base, it is both a positive base and a negative base.
\item If $X$ is a positive base or a negative base, then it is both an open base and a virtual base.
\item If $X$ is a positive base and unbounded below or a negative base and unbounded above, then it is a base.
\item If $X$ is a base (resp. positive base, negative base, open base, or virtual base), then $-X$ is a base
(resp. negative base, positive base, open base, or virtual base).
\end{enumerate}

\end{lem}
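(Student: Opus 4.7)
All four parts fall out from a direct manipulation of the $N$-fold sumset, so my plan is to handle them in the order (a), (d), (b), (c), using the symmetry provided by (d) to cut later cases in half. For (a), I would note that $\bbq$ itself contains every half-line and every bounded open interval, so the same $N$ witnessing a base also witnesses a positive, negative, and open base, and trivially a virtual base. For (d), negation is an order-reversing bijection of $\bbq$ that sends the $N$-fold sumset of $X$ to that of $-X$; in particular it preserves the base, open base, and virtual base conditions and swaps the positive and negative base conditions.

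For (b), suppose $\underbrace{X+\cdots+X}_N \supseteq (a,\infty)\cap\bbq$. Any bounded subinterval $(a,a+1)\cap\bbq$ lies inside this half-line, giving the open-base conclusion with the same $N$. For the virtual-base conclusion, given any $q\in\bbq$ I would pick a rational $c$ satisfying both $c>a$ and $c+q>a$; then $c$ and $c+q$ each lie in the $N$-fold sumset, and writing each as a sum of $N$ elements of $X$ expresses $q=(c+q)-c$ as a signed sum of $2N$ elements of $X$, independent of $q$. The negative-base case follows by applying (d) and then the positive-base statement already proved.

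For (c), the plan is a one-step shift. Under the hypothesis that $X$ is additionally unbounded below, given $q\in\bbq$ I can pick $y\in X$ with $y<q-a$, so that $q-y>a$ lies in the half-line covered by the $N$-fold sumset; writing $q-y=x_1+\cdots+x_N$ then expresses $q=y+x_1+\cdots+x_N$ as a sum of $N+1$ elements of $X$, with the bound $N+1$ uniform in $q$. The mirror statement for a negative base unbounded above follows by invoking (d). There is no genuine obstacle in the lemma, but one small care point sits in (b): one must resist the temptation to shift $q$ by a variable number of copies of a single fixed element of $X$, as that would destroy the uniform bound on the number of summands. Splitting $q$ as a difference of two positive targets, and running the positive-base hypothesis twice, avoids the issue cleanly.
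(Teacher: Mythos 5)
Your proof is correct, and it is precisely the routine verification the paper has in mind when it calls these properties ``immediate'' and omits the argument entirely. The one place a careless reader could slip --- getting a uniform bound on the number of summands in the virtual-base part of (b) --- is exactly the point you handle correctly by writing $q$ as a difference of two targets in the half-line and using $2N$ signed terms.
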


The following lemma gives obvious obstructions to a set $X$ being a base (or positive base, etc.)

\begin{lem}
\label{bounds}
For $X\subseteq \bbq$, we have
\begin{enumerate}[$(a)$]
\item If $X$ is a positive base, it cannot be bounded above.
\item If $X$ is a negative base, it cannot be bounded below.
\item If $X$ is a virtual base, it cannot be bounded.
\item If $X$ is any kind of base, it cannot be $p$-adically bounded for any prime $p$.
\end{enumerate}

\end{lem}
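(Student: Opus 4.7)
The plan is to show in each case that the set $\pm X\pm\cdots\pm X$ (or $X+\cdots+X$) inherits the relevant boundedness from $X$, and then observe that the putative sum set is not bounded in the required way.

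For parts (a), (b), and (c) I would just invoke the archimedean triangle inequality. If $X$ is bounded above by $M$, then $X+\cdots+X$ ($N$ summands) is bounded above by $NM$, hence cannot contain any set of the form $(a,\infty)\cap\bbq$; this gives (a). Part (b) follows either by the same argument or by applying (a) to $-X$ via Lemma \ref{obvious}(d). For (c), if $|x|\le M$ for all $x\in X$, then $|\pm x_1\pm\cdots\pm x_N|\le NM$, so $\pm X\pm\cdots\pm X$ is a bounded subset of $\bbq$ and cannot equal $\bbq$.

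For (d), the key input is the non-archimedean (ultrametric) inequality: if $|x|_p\le M$ for all $x\in X$, then for any choice of signs and any $x_1,\dots,x_N\in X$ we have $|\pm x_1\pm\cdots\pm x_N|_p\le\max_i|x_i|_p\le M$, so the $N$-fold (signed) sum set is itself $p$-adically bounded. It therefore suffices to check that each kind of base-witnessing set is $p$-adically unbounded. This is obvious for $\bbq$ itself (take $p^{-k}$), and also for $(a,\infty)\cap\bbq$ and $(-\infty,b)\cap\bbq$ since they contain arbitrarily large real numbers that can be chosen of the form $n+p^{-k}$. The one case that takes a sentence is the open base case: given any interval $(a,b)$ with $a<b$ and any target $p$-adic size $p^k$, I would choose $k$ so that $(b-a)p^k>p$; then among the consecutive integers in $[ap^k,bp^k]$ at least one, call it $j$, is coprime to $p$, and $j/p^k\in(a,b)\cap\bbq$ satisfies $|j/p^k|_p=p^k$.

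Parts (a)--(c) are really immediate from the triangle inequality, so the only step that requires any thought is the density statement in (d), namely that every nonempty open rational interval contains elements of arbitrarily large $p$-adic absolute value; I expect this to be the main (and only) actual obstacle, and the counting argument above handles it.
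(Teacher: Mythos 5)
Your proposal is correct; the paper in fact gives no proof of this lemma at all (it is introduced as a list of ``obvious obstructions''), and your argument is exactly the intended one: boundedness (archimedean or ultrametric) passes to finite signed sum sets, while each of the target sets $\bbq$, $(a,\infty)\cap\bbq$, $(-\infty,b)\cap\bbq$, $(a,b)\cap\bbq$ is unbounded in the relevant sense. The only step needing any care, that a nonempty open rational interval contains elements of arbitrarily large $p$-adic absolute value, is handled correctly by your choice of $j/p^k$ with $j$ coprime to $p$.
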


If $f(x)\in \bbq(x)$ is a rational function and $F$ is any field of characteristic $0$, we denote by $f(F)$ the set of values $f(a)$ as $a$ ranges over all elements of $F$ which are not poles of $f$.

\begin{defn}
We say $f$ \emph{satisfies WP} if $f(\bbq)$ is a base.  We say it \emph{satisfies the positive} (resp. \emph{negative}) \emph{WP} if
$f(\bbq)$ is a positive (resp. negative) base.   We say it \emph{satisfies the EWP} if $f(\bbq)$ is a virtual base.
\end{defn}

\begin{prop}
\label{poles}
If $f(x)\in \bbq(x)$ is a rational function then
\begin{enumerate}[$(a)$]
\item For $f$ to satisfy WP, it is necessary for it to have at least two distinct  poles in $\bbr\bbp^1$ or one  pole of odd order in  $\bbr\bbp^1$
\item If $f(\bbq)$ is an open base and $f$ has at least one pole of odd order in  $\bbr\bbp^1$, then $f$ satisfies WP.
\item For $f$ to satisfy the EWP, it is necessary for $f$ to have a  pole in  $\bbr\bbp^1$
\item For $f$ to satisfy the EWP, it is necessary that for each prime $p$, $f$ has a pole in $\bbq_p\bbp^1$.
\end{enumerate}

\begin{proof}
Part (a) follows from parts (a) and (b) of Lemma~\ref{bounds}.  Parts (c) and (d) follow from parts (c) and (d) of Lemma~\ref{bounds} respectively.
For part (b), we note that if  $f$ has at least one pole of odd order in  $\bbr\bbp^1$, then the closure of $f(\bbq)$ contains a neighborhood of $\infty$ in $\bbr\bbp^1$, i.e., contains all real numbers of absolute value $> B$ for some $B$.  If
$$(a,b)\cap\bbq \subset \underbrace{f(\bbq)+\cdots+f(\bbq)}_N$$
and $M(b-a) > 2B$, then setting $Y := f(\bbq)\cap  (-\infty,-B)$ and $Z:= f(\bbq)\cap  (B,\infty)$, we have
$$\bbq\subseteq ((Y \cup Z )+(Ma,Mb)\cap \bbq)\subseteq\underbrace{f(\bbq)+\cdots+f(\bbq)}_{1+MN}.$$

\end{proof}

\end{prop}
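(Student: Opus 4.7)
The plan is to treat parts (a), (c), and (d) as near-immediate consequences of Lemma~\ref{bounds}: in each case I will translate the negation of the pole hypothesis into a boundedness statement about $f(\bbq)$, then invoke the relevant part of the obstruction lemma. Part (b) is the substantive step and requires a scaling argument that uses the odd-order pole to break out of any bounded interval.

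For (a), if $f$ has no real pole, then $f$ extends continuously to the compact space $\bbr\bbp^1$, so $f(\bbq) \subseteq f(\bbr)$ is bounded. If instead $f$ has a single real pole, of even order, then the two one-sided limits at that pole agree in sign, and a standard compactness argument on $\bbr\bbp^1$ shows $f(\bbr)$, and hence $f(\bbq)$, is bounded above or bounded below. Both situations are ruled out by combining Lemma~\ref{obvious}(a) with Lemma~\ref{bounds}(a)(b). For (c), the same archimedean boundedness in the absence of a real pole contradicts Lemma~\ref{bounds}(c). For (d), when $f$ has no pole in $\bbq_p\bbp^1$, continuity on the compact $\bbq_p$-projective line bounds $|f(\bbq)|_p$, contradicting Lemma~\ref{bounds}(d).

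For (b), I would first establish that $f(\bbq)$ is dense in $(-\infty,-B) \cup (B,\infty)$ for some $B > 0$: an odd-order real pole $\alpha$ of $f$ sends the two one-sided punctured neighborhoods of $\alpha$ onto $(B,\infty)$ and $(-\infty,-B)$ respectively, and density of $\bbq$ near $\alpha$ together with continuity of $f$ at rational points near $\alpha$ transfers this to density of $f(\bbq)$ on the output side. Starting from the open base relation $\sum_N f(\bbq) \supseteq (a,b) \cap \bbq$, I iterate it to $\sum_{MN} f(\bbq) \supseteq (Ma, Mb) \cap \bbq$ for any positive integer $M$, and choose $M$ so that $M(b-a) > 2B$. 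Now, for any $r \in \bbq$, the interval $r - (Ma, Mb)$ has length exceeding $2B$ and so cannot be contained in $[-B,B]$; by density it contains some $z = f(x_0)$ with $x_0 \in \bbq$ and $|z| > B$, and then $r - z \in (Ma,Mb) \cap \bbq \subseteq \sum_{MN} f(\bbq)$ yields $r \in \sum_{1+MN} f(\bbq)$.

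I do not anticipate a serious obstacle. The one delicate point is the density of $f(\bbq)$ near an odd-order real pole when that pole is irrational; but since $f$ has only finitely many poles, I can find a one-sided deleted open interval on which $f$ is continuous and monotone with range reaching $\pm\infty$, reducing the density claim to the density of $\bbq$ in $\bbr$. The remainder of the argument for (b) is purely a scaling observation and the single decomposition $r = z + (r-z)$.
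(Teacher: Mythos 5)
Your proposal is correct and follows essentially the same route as the paper: parts (a), (c), (d) reduce to the boundedness obstructions of Lemma~\ref{bounds} (you simply spell out the compactness arguments the paper leaves implicit), and part (b) uses the identical decomposition $r = z + (r-z)$ with $z \in f(\bbq)$ of absolute value greater than $B$ and $r - z \in (Ma,Mb)\cap\bbq$, after scaling the open base interval so that $M(b-a) > 2B$. The only cosmetic difference is that you phrase the key fact as density of $f(\bbq)$ in $(-\infty,-B)\cup(B,\infty)$ where the paper says the closure of $f(\bbq)$ contains a neighborhood of $\infty$ in $\bbr\bbp^1$; these are the same statement.
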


The following proposition shows that the property of being a base is not affected by any finite subset of elements.

\begin{prop}
\label{change}
If $X, Y\subseteq \bbq$ and $X\setminus Y$ and $Y\setminus X$ are finite, then $X$ is a base if and only if $Y$ is a base.
\end{prop}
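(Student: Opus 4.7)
My plan would be to induct on $|X \triangle Y|$. The base case $|X \triangle Y| = 0$ is trivial. For the inductive step, since adding an element to a base yields a base (any superset of a base is a base), by the symmetry of the statement in $X$ and $Y$ it will suffice to handle the removal of a single element: assume $NX = \bbq$ and $Y = X \setminus \{b\}$ for some $b \in X$, and show that $Y$ is a base. After translating so that $0 \in Y$ (permissible since being a base is translation-invariant), padding with $0$'s gives $jY \subseteq (j+1)Y$ for all $j \geq 0$.

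For any $q \in \bbq$, a representation $q = x_1 + \cdots + x_N$ with $x_i \in X$ decomposes as $q = kb + s$, where $k$ counts the $x_i$ equal to $b$ and $s \in (N-k)Y \subseteq NY$. This yields
$$\bbq \subseteq NY + \{0, b, 2b, \ldots, Nb\}.$$
Thus the problem reduces to exhibiting $b \in M_1 Y$ for some integer $M_1 \geq 1$: then $kb \in kM_1 Y \subseteq NM_1 Y$ for $k \leq N$, and combining yields $\bbq \subseteq (N + NM_1) Y$, so $Y$ is a base with constant $N + NM_1$.

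To locate $b$ as a bounded sum of $Y$-elements, I would first argue at the group level. The subgroup $\langle Y \rangle_{\mathrm{grp}}$ of $\bbq$ must equal $\bbq$: otherwise $\bbq / \langle Y \rangle_{\mathrm{grp}}$ would be a nontrivial divisible abelian group cyclically generated by the image of $b$, hence a nontrivial cyclic divisible group, which does not exist. So $b = u - v$ with $u, v$ both finite sums of elements of $Y$.

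The hard part will be upgrading this difference representation to a sum representation $b \in \langle Y \rangle_+ := \bigcup_j jY$. My approach would be to apply the cover $\bbq \subseteq NY + \{0, b, \ldots, Nb\}$ to various rational multiples of $b$. Taking $q = cb$ with $c \in \bbz$ forces the sub-monoid $D := \{d \in \bbz : db \in \langle Y \rangle_+\}$ to contain elements of every length-$(N+1)$ interval in $\bbz$ and to be unbounded in both directions (from large positive $c$ and large negative $c$). Taking $q = b/k$ for integer $k \geq 2$ produces elements of $\{r \in \bbq : rb \in \langle Y \rangle_+\}$ with denominator $k$; after clearing denominators, such elements should prevent $D$ from lying inside $g\bbz$ for any $g \geq 2$. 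Combining these inputs gives $\gcd(D) = 1$, hence $1 \in D$ (using that a sub-monoid of $\bbz$ containing elements of both signs whose overall $\gcd$ is $1$ necessarily contains $1$, by a Bezout-type numerical-semigroup argument), so $b \in \langle Y \rangle_+$ as required. This final step---ruling out the possibility that $D$ is concentrated in a proper subgroup $g\bbz$---is the most delicate part of the argument.
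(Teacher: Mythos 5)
Your proposal is correct, and at its core it runs on the same two ingredients as the paper's proof --- the divisibility of $\bbq$ and a coprimality/numerical-semigroup argument inside $\bbz$ --- but the bookkeeping is genuinely different. The paper normalizes the exceptional element to $0$, so that $Y = X \cup \{0\}$ and $Y_N = \{0\}\cup X_1\cup\cdots\cup X_N$ (where $X_k$ denotes the $k$-fold sumset), and studies the additive semigroup of lengths $\{k : 0 \in X_k\}$: since $-x/m$ lies in some $X_i$ with $i \le N$, the identity $0 = x + m(-x/m)$ puts $0$ in $X_{1+im}$, and running this twice produces two coprime lengths, hence all sufficiently large lengths, hence $X_{K+N} \supseteq Y_N = \bbq$. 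You instead normalize so that $0 \in Y$ and study the monoid $D = \{d \in \bbz : db \in \bigcup_j jY\}$, which you correctly force to equal $\bbz$: elements of both signs come from $q = cb$, and elements $\equiv 1 \pmod{k}$ come from $q = b/k$, since $(1-jk)b \in kNY$; the concluding Bezout step is the standard fact that a submonoid of $\bbz$ containing elements of both signs is a union of cosets of a nontrivial subgroup, hence is itself a subgroup, hence equals $\gcd(D)\bbz = \bbz$. So the step you flag as delicate does go through exactly as you indicate, and your version, while slightly longer, is equally valid. One remark: the divisible-group argument showing $\langle Y\rangle_{\mathrm{grp}} = \bbq$ is correct but superfluous --- your analysis of $D$ never uses the difference representation $b = u - v$ and already yields $b \in \bigcup_j jY$ on its own.
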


\begin{proof}
Without loss of generality, we may assume $Y = X\cup\{y\}$.  Translating, we may assume $y=0$.  The non-trivial direction is that if $Y$ is a base, the same is true for $X$.
Let
$$X_n := \underbrace{X+X+\cdots+X}_n, Y_n := \underbrace{Y+Y+\cdots+Y}_n.$$
As $Y$ is a base, $Y_N=\bbq$ for some $N>0$.  Let $x\in X$ be any non-zero element.  For any positive integer $m$, $-x/m\in Y_N$, so $-x/m \in X_i$ for some positive
integer $i\le N$, and this implies $0 = x+m(-x/m) \in X_{1+im}$.  Letting $M:= 1+im$, and applying the same reasoning to $-x/M\in Y_N$, we see that $0\in X_{1+jM}$
for some positive integer $j$.  The set of positive integers $k$ such that $0\in X_k$ is a semigroup, and it contains the relatively prime elements $M$ and $1+jM$, so it
contains all integers $\ge K$ for some integer $K$. Thus,
$$X_{K+N}\supset \{0\}\cup X_1\cup\cdots\cup X_N = Y_N = \bbq.$$

\end{proof}

\begin{cor}
\label{compose}
If $g(x)\in \bbq(x)$ is a fractional linear transformation, then
$f(g(F))$ is a base if and only if $f(F)$ is a base.
\end{cor}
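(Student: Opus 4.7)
The plan is to reduce the corollary directly to Proposition~\ref{change}. Any fractional linear transformation $g(x)=(ax+b)/(cx+d)$ with $ad-bc\neq 0$ is injective on its domain in $\bbq$. If $c\neq 0$, its image is $\bbq\setminus\{a/c\}$ (the ``value at $\infty$'' is the only rational number not attained), while if $c=0$, $g$ is a nonconstant linear polynomial and $g(\bbq)=\bbq$. In either case, $\bbq\setminus g(\bbq)$ has at most one element.

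The next step is to transfer this finiteness through $f$. Because $g(\bbq)\subseteq\bbq$, we automatically have $f(g(\bbq))\subseteq f(\bbq)$, so $f(g(\bbq))\setminus f(\bbq)=\emptyset$. On the other side, any element of $f(\bbq)\setminus f(g(\bbq))$ must be of the form $f(b)$ with $b\in\bbq\setminus g(\bbq)$, because any $b\in g(\bbq)$ not a pole of $f$ would contribute to $f(g(\bbq))$. Thus $f(\bbq)\setminus f(g(\bbq))$ is finite (of cardinality at most one).

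With both symmetric differences finite, Proposition~\ref{change} applies with $X=f(g(\bbq))$ and $Y=f(\bbq)$ and yields the desired equivalence. There is no real obstacle: the corollary is essentially a direct application of Proposition~\ref{change}, and the only step of actual content is the elementary observation that a fractional linear map on $\bbq$ is a bijection away from a set of at most one point.
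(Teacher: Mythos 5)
Your proof is correct and takes essentially the same route as the paper: both reduce the corollary to Proposition~\ref{change} via the observation that a fractional linear transformation is a bijection of $\bbq\bbp^1$, so $g(\bbq)$ and $\bbq$ (and hence $f(g(\bbq))$ and $f(\bbq)$) differ in at most one element. The paper states this in a single line; you have merely spelled out the same argument in more detail.
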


\begin{proof}
As $g(F)\setminus F$ and $F\setminus g(F)$ have at most one element each, the corollary follows immediately.
\end{proof}

\section{The EWP for Laurent polynomials in  characteristic $0$}

Throughout this section, $K$ will denote a field of characteristic $0$.  Our main result  is the following theorem.

\begin{theorem}\label{main-Laurent} If $f(x)\in K\left[x,\frac{1}{x}\right]$ is a non-constant Laurent polynomial, then there exists a positive integer $N$ such that
$$\underbrace{\pm f( K)\pm f( K)\pm \cdots \pm f( K)}_{N} = K.$$
In particular, $f(x)$ satisfies the EWP.
\end{theorem}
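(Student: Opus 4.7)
The plan is to reduce the theorem to the classical EWP for a monomial $x^j$ ($j \ne 0$) by isolating a single power of $x$ from $f$ via a Vandermonde-style linear combination of dilates $f(c^k x)$.

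Write $f(x) = \sum_{i \in S} a_i x^i$ with finite support $S \subset \bbz$ and $a_i \ne 0$ for $i \in S$; set $s := |S|$. Since $f$ is non-constant, $S$ contains some $j \ne 0$; fix one such $j$. Choose $c \in \bbq$, $c \ne 0$, that is not a root of unity (for example $c = 2$), so that the $s$ elements $\{c^i : i \in S\}$ are pairwise distinct.

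The first step constructs integers $\alpha_0, \ldots, \alpha_{s-1}$ and an element $\beta \in K \setminus \{0\}$ satisfying the Laurent-polynomial identity
$$\sum_{k=0}^{s-1} \alpha_k f(c^k x) = \beta x^j \qquad \text{in } K[x, x^{-1}].$$
To see this, expand the left-hand side as $\sum_{i \in S} a_i \bigl(\sum_k \alpha_k c^{ki}\bigr) x^i$ and require the inner sum to vanish for $i \in S \setminus \{j\}$ and be non-zero at $i = j$. This is an $s \times s$ linear system with coefficient matrix $M = (c^{ki})_{i \in S,\, 0 \le k < s}$, which is Vandermonde in the distinct elements $\{c^i\}$ and hence invertible over $\bbq$. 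Solving in $\bbq^s$ and clearing denominators yields the desired $\alpha_k \in \bbz$ and $\beta \in K \setminus \{0\}$.

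The second step specializes $x$ to any non-zero $a \in K$: the identity places $\beta a^j$ in $\underbrace{\pm f(K) \pm \cdots \pm f(K)}_{N_0}$ with $N_0 := \sum_k |\alpha_k|$. Thus the orbit $\beta \{a^j : a \in K, a \ne 0\}$ lies in this bounded signed-sum set, and it remains to apply the EWP to the monomial $\beta x^j$. For $j > 0$ this is the standard finite-difference argument: $\Delta^j(\beta x^j) = \beta j!$ is a signed sum of $2^j$ values of $\beta x^j$, and the linear expression $\Delta^{j-1}(\beta x^j)$ then gives $\beta j!(x - y)$ as another bounded signed sum, so every element of $K$ is a bounded signed sum of values of $\beta x^j$. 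For $j < 0$, the substitution $a \mapsto 1/a$ identifies the value set of $\beta x^j$ on $K \setminus \{0\}$ with that of $\beta x^{-j}$ on $K \setminus \{0\}$, reducing to the positive case (the single discrepancy at $a = 0$ is absorbed by the obvious analogue of Proposition~\ref{change} over $K$). Composing the two levels yields a uniform bound $N \le N_0 \cdot M$ and proves the theorem.

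The essential non-routine ingredient---and the step I expect to be the main obstacle to finding the proof---is the Vandermonde identity of Step~1: while a Laurent polynomial admits no direct additive finite-difference reduction, multiplicative dilations provide a clean projection onto a single monomial. Once this identity is in hand, all subsequent steps are formal applications of linear algebra and the classical monomial EWP. As a sanity check, for $f(x) = x + 1/x$ the procedure with $c = 2$ and $j = 1$ produces the explicit identity $-f(x) + 2 f(2x) = 3x$, giving the bound $N = 3$.
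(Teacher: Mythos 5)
Your proof is correct, but it takes a genuinely different route from the paper's. The paper deduces Theorem~\ref{main-Laurent} from the stronger Theorem~\ref{main-stronger}, which asserts that the signed power sums $\sum_i \pm x_i^s$ can be prescribed \emph{independently and simultaneously} for all $s\in S$; that statement is proved by induction on $|S|$ via a ring-theoretic stabilization argument (the sets $X_k^S$ eventually stabilize to a subring of $K^S$, and the degenerate case is ruled out by classifying ring homomorphisms $K^S\to K$). You instead decouple the monomials multiplicatively: your identity $\sum_k \alpha_k f(c^k x)=\beta x^j$, with $(\alpha_k)$ obtained by clearing denominators in the coefficients of $\prod_{i\in S\setminus\{j\}}(T-c^i)$ (so that $\beta$ is a non-zero multiple of $a_j$), projects $f$ onto a single monomial at the cost of a bounded number of signed summands, after which the finite-difference identity $\Delta^{j-1}(\beta x^{j})=\beta\bigl(j!\,x+\tfrac{(j-1)\,j!}{2}\bigr)$ --- the same identity the paper uses for its base case $|S|=1$ --- finishes the argument; taking the difference of two such expressions, $\beta\, j!\,(x-y)$, even disposes of the finitely many excluded arguments without appeal to any analogue of Proposition~\ref{change}, and for $j<0$ there is in fact no discrepancy to absorb, since the positive-exponent argument only ever evaluates the monomial on $K^*$, where the value sets of $x^{j}$ and $x^{-j}$ coincide exactly. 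The trade-off is clear: your argument is shorter, entirely elementary, and yields explicit and comparatively small bounds for $N$ (on the order of $2^{|j|}\sum_k|\alpha_k|$), whereas the paper's induction produces bounds only implicitly through its Case 1/Case 2 dichotomy; on the other hand, your dilation trick ties all evaluation points together as $c^k a$ for a single $a$, so it cannot recover the simultaneous, coordinatewise control of Theorem~\ref{main-stronger}, which is of independent interest.
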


In fact, we prove the following stronger result.

\begin{theorem}\label{main-stronger} Let $S$ be a finite set of non-zero integers.  Let
$ K^S$ denote the $ K$-algebra of functions $S\to  K$ and
$\bg^S: K^*\rightarrow  K^S$ denote the function defined by
$$\bg^S(x):= s\mapsto x^s.$$
Let
$$X_k^S:=\underbrace{ \bg^S( K^*)+  \cdots + \bg^S( K^*)}_{k}\underbrace{-\bg^S( K^*)-  \cdots - \bg^S( K^*)}_{k}.$$
Then, there exists a positive integer $N$ such that
    $$X_N^S = K^S.$$
\end{theorem}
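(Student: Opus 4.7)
The plan is to prove Theorem~\ref{main-stronger} by induction on $|S|$. For the base case $|S|=1$, say $S=\{d\}$ with $d\ne 0$, first note that $\{x^{-d}:x\in K^*\}=\{y^d:y\in K^*\}$, so we may assume $d>0$. Then invoke the classical finite-difference identity obtained from $\Delta^{d-1}[x^d]=d!\,y+C_d$ for a constant $C_d\in\mathbb{Z}$, namely
$$\sum_{k=0}^{d-1}(-1)^{d-1-k}\binom{d-1}{k}(y+k)^d\;=\;d!\,y+C_d,$$
so that setting $y=(a-C_d)/d!$ expresses any $a\in K$ as a signed sum of $d$-th powers with a bounded number of terms (the fixed integer $C_d$ is itself a bounded sum of $\pm 1^d$'s), and a short bookkeeping step balances the $+$ and $-$ counts to fit the $X_N^S$ form.

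For the inductive step $|S|\ge 2$, I would exploit the pair of parity-extraction identities
$$\bg^S(x)+\bg^S(-x)\;=\;2\,\iota_e\bigl(\bg^{S_e}(x)\bigr),\qquad \bg^S(x)-\bg^S(-x)\;=\;2\,\iota_o\bigl(\bg^{S_o}(x)\bigr),$$
where $S=S_e\sqcup S_o$ is the decomposition into even and odd exponents and $\iota_e:K^{S_e}\to K^S$, $\iota_o:K^{S_o}\to K^S$ are the obvious extensions by zero. When both $S_e$ and $S_o$ are nonempty, these identities reduce the problem to the strictly smaller sets $S_e$ and $S_o$: given $v\in K^S$, write $v=\iota_e(v_e)+\iota_o(v_o)$, apply the inductive hypothesis to $v_e/2\in K^{S_e}$ and to $v_o/2\in K^{S_o}$, and lift each resulting $\bg^{S_e}(y)$- or $\bg^{S_o}(y)$-term to a two-$\bg^S$-term combination via the displayed identities. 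The total number of terms is then bounded by twice the sum of the inductive bounds for $S_e$ and $S_o$, uniformly in $v$.

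The main obstacle is the single-parity case, where $S$ is entirely even or entirely odd and the parity split fails to reduce $|S|$. For $S=2S'$ all even, the identity $\bg^S(x)=\bg^{S'}(x^2)$ transfers the question to $\bg^{S'}$ evaluated on the subgroup of squares $K^{*2}\subset K^*$; one way to handle this is to strengthen the inductive hypothesis by asking the conclusion when $K^*$ is replaced by any prescribed finite-index subgroup. Alternatively, one can auxiliarly enlarge $S$ to $\tilde S=S\cup\{s_0\}$ with $s_0$ of the opposite parity, prove the theorem for $\tilde S$ first (which requires running the induction on a finer invariant than $|S|$ alone, e.g.\ the lexicographic pair $(|S|,\min(|S_e|,|S_o|))$), and then deduce the result for $S$ by projecting $K^{\tilde S}\to K^S$; this projection carries $\bg^{\tilde S}(x)$ to $\bg^S(x)$ and hence maps $X_N^{\tilde S}$ onto $X_N^S$. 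Arranging the induction so the single-parity case is genuinely covered is where the bulk of the difficulty lies.
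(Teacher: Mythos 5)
Your base case matches the paper's (reduce to $d>0$, apply the $(d-1)$-st finite difference to $x^d$), and your mixed-parity reduction via $\bg^S(x)+\bg^S(-x)$ and $\bg^S(x)-\bg^S(-x)$ is correct and yields a uniform bound whenever $S_e$ and $S_o$ are both non-empty. But the single-parity case is a genuine gap, and neither of your proposed repairs closes it. The enlargement trick is circular: if $S$ is all even and you pass to $\tilde S=S\cup\{s_0\}$ with $s_0$ odd, the parity split for $\tilde S$ hands you back $\tilde S_e=S$ itself, so proving the theorem for $\tilde S$ presupposes it for $S$; your proposed invariant $(|S|,\min(|S_e|,|S_o|))$ \emph{increases} from $(|S|,0)$ to $(|S|+1,1)$ under the enlargement, so it cannot license the detour. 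The subgroup-strengthening idea fails too: for $K=\bbq$ the subgroup $K^{*2}$ has infinite index in $K^*$, so ``finite-index subgroup'' is not the right class; and, more fatally, the parity-extraction identities require evaluating $\bg$ at $-x$, while $x\mapsto -x$ does not preserve the subgroup of squares, so the strengthened hypothesis cannot be propagated by the same mechanism. (Note also that the all-odd case with $|S|\ge 2$ is equally stuck: there the split returns $S_o=S$, and halving exponents is unavailable.)

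The paper's inductive step is an entirely different device, and you would need something of comparable strength. It adjoins one exponent $t$ at a time and studies the projection $p\colon X_k^{S\cup\{t\}}\to X_k^S$. Either two elements of $X_{N+1}^{S\cup\{t\}}$ share their $S$-projection, in which case their difference is supported at $t$ and multiplying by it (the sets $X_k$ are closed under products up to reindexing, since a product of two signed sums of $\bg$-values is again such a sum) generates every function vanishing on $S$, whence everything; or else $p$ is injective on the stabilized sumset $X^{S\cup\{t\}}=X_N^{S\cup\{t\}}=X_{N+1}^{S\cup\{t\}}$, which is then a unital ring isomorphic to $K^S$ under $p$, forcing the $t$-coordinate of every element to equal $\psi(f(s))$ for a fixed $s\in S$ and a field endomorphism $\psi$; the element $\bg^{S\cup\{t\}}(2)-\bg^{S\cup\{t\}}(1)$ gives $2^t-1=2^s-1$, a contradiction. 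Your write-up honestly flags where the difficulty lies, but as it stands the induction does not terminate.
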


We defer the proof of Theorem~\ref{main-stronger}, starting instead with the following proposition:

\begin{prop} Theorem~\ref{main-stronger} implies Theorem~\ref{main-Laurent}.
\end{prop}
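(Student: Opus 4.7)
The plan is to reinterpret a Laurent polynomial as a linear functional on the function-space $K^S$ and pull back Theorem~\ref{main-stronger} along it. Expand $f$ in monomials, separating the constant term:
\[
f(x) = c_0 + \sum_{s \in S} c_s x^s,
\]
where $S$ is the (necessarily non-empty, by non-constancy of $f$) finite set of non-zero integers $s$ for which the coefficient of $x^s$ in $f$ is non-zero. Define the $K$-linear functional $L : K^S \to K$ by $L(\phi) := \sum_{s \in S} c_s\, \phi(s)$; then for every $x \in K^*$,
\[
f(x) = c_0 + L(\bg^S(x)).
\]
By construction each $c_s$ with $s \in S$ is non-zero, so $L \ne 0$, and hence $L$ is surjective onto $K$.

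Now fix any target $k \in K$ and choose $\phi \in K^S$ with $L(\phi) = k$. By Theorem~\ref{main-stronger}, there is an $N$ depending only on $S$ (and therefore only on $f$) such that $X_N^S = K^S$, so we may write
\[
\phi = \sum_{i=1}^{N} \bg^S(a_i) - \sum_{j=1}^{N} \bg^S(b_j)
\]
for some $a_i, b_j \in K^*$. Applying $L$ and using $L(\bg^S(x)) = f(x) - c_0$, the count of $+$ and $-$ terms is equal, so the $c_0$ contributions cancel exactly, and
\[
k = L(\phi) = \sum_{i=1}^{N} f(a_i) - \sum_{j=1}^{N} f(b_j),
\]
expressing $k$ as a signed sum of $2N$ values of $f$ on $K$. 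Since $N$ is independent of $k$, this gives the desired expression.

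The step I would flag as the substantive one is the cancellation of $c_0$. This is not incidental: it is precisely why Theorem~\ref{main-stronger} is formulated with \emph{balanced} signed sums ($k$ plus signs and $k$ minus signs). If one only had an unbalanced representation, the constant term $c_0$ would contribute a non-trivial offset $(m_+ - m_-)c_0$, and one would have to argue separately that the pair $(\phi, m_+ - m_-) \in K^S \oplus \bbz$ can be adjusted to hit an arbitrary element of $K$; with the balanced formulation this is automatic, and the remainder of the reduction is purely formal linear algebra.
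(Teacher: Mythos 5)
Your proof is correct and is essentially the paper's argument: the paper likewise separates the constant term, picks $c_s$ with $\sum_{s\in S} a_s c_s = c$ (your choice of $\phi$ with $L(\phi)=k$), and relies on the balanced signed sums to cancel the constant term. The only difference is presentational, namely your explicit packaging of the coefficients as a linear functional $L$ on $K^S$.
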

\begin{proof}
For a given $f(x)\in  K\left[x,\frac{1}{x}\right]$, let $S$ denote the set of non-zero exponents of monomials occurring in $f(x)$.
As $f(x)$ is not constant, $S$ is non-empty.  We write
$$f(x)=a_0 + \sum\limits_{s\in S} a_s x^s,$$
where $a_0$ may be zero but $a_s \neq 0$ for all $s\in S$.
For $x_i, y_j\in K^*$,
$$\sum\limits_{i=1}^Nf(x_i)-\sum\limits_{j=1}^Nf(y_j)=c,$$
if and only if
\begin{equation}
\label{want}\sum\limits_{s\in S} a_s \left(\sum\limits_{i=1}^Nx_i^s-\sum\limits_{j=1}^Ny_j^s\right)=c.
\end{equation}

Choose $c_s\in K$ for each $s\in S$ such that
\begin{equation}
\label{dot-prod}
\sum\limits_{s\in S} a_s c_s=c.
\end{equation}
If we assume that Theorem~\ref{main-stronger} is true for some positive integer $N$, then there exist $x_i, y_j\in K^*$ such that for all $s\in S$,
\begin{equation}
\label{c-control}
c_s =\sum\limits_{i=1}^Nx_i^s-\sum\limits_{j=1}^N y_j^s.
\end{equation}
Thus, (\ref{want}) follows from (\ref{dot-prod}) and (\ref{c-control}).
This proves Theorem~\ref{main-Laurent}.
\end{proof}

To prove Theorem~\ref{main-stronger}, we need the following lemma.

\begin{lem}\label{operation} Following the notations in Theorem~\ref{main-stronger},
\begin{enumerate}[$(a)$]
    \item $X_k^S+X_\ell^S=X_{k+\ell}^S$.
    \item  $X_k^S-X_\ell^S=X_{k+\ell}^S$.
    \item $X_k^SX_\ell^S\subseteq X_{2k\ell}^S$.
\end{enumerate}
\end{lem}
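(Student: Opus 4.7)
The plan is to treat parts (a), (b) as formal bookkeeping and part (c) as the one substantive point, which hinges on the multiplicativity of the map $\bg^S$.

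For part (a), I would simply expand the definitions: an arbitrary element of $X_k^S + X_\ell^S$ is a sum of $k+\ell$ terms of the form $+\bg^S(x_i)$ together with $k+\ell$ terms of the form $-\bg^S(y_j)$, which is exactly the shape of an arbitrary element of $X_{k+\ell}^S$; conversely, any element of $X_{k+\ell}^S$ can be split by grouping the first $k$ positive and first $k$ negative summands into an element of $X_k^S$ and the remaining $\ell$ positive and $\ell$ negative summands into an element of $X_\ell^S$. For part (b), the key observation is that the defining expression for $X_\ell^S$ has equal numbers of positive and negative $\bg^S$-summands, so $-X_\ell^S = X_\ell^S$ (negation merely relabels the summands). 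Then (b) follows immediately from (a).

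Part (c) is where the real content lies. The essential input is that $\bg^S\colon K^*\to K^S$ is a multiplicative homomorphism into the componentwise product $K$-algebra $K^S$: indeed, $(\bg^S(x)\,\bg^S(y))(s) = x^s y^s = (xy)^s = \bg^S(xy)(s)$, so $\bg^S(x)\bg^S(y) = \bg^S(xy)$. Writing a typical element of $X_k^S$ as $u = \sum_{i=1}^{2k}\varepsilon_i\,\bg^S(x_i)$ with $\varepsilon_i=+1$ for $1\le i\le k$ and $\varepsilon_i=-1$ for $k+1\le i\le 2k$, and similarly $v = \sum_{j=1}^{2\ell}\eta_j\,\bg^S(y_j)$, I would expand
$$uv = \sum_{i=1}^{2k}\sum_{j=1}^{2\ell} \varepsilon_i\eta_j\,\bg^S(x_iy_j).$$
Counting signs: pairs $(i,j)$ with $\varepsilon_i\eta_j=+1$ are those where both signs agree, contributing $k\ell + k\ell = 2k\ell$ positive summands, and similarly $2k\ell$ negative summands. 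Since $x_iy_j\in K^*$, this exhibits $uv$ as an element of $X_{2k\ell}^S$.

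There is no genuine obstacle here; the whole lemma is formal once one recognises that $\bg^S$ is a group homomorphism $K^*\to (K^S)^\times$ and that the sign pattern behaves combinatorially as above. I would expect the write-up to be essentially a paragraph, with part (c) taking up most of the space just for the sign count.
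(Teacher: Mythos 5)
Your proposal is correct and follows essentially the same route as the paper: the paper dismisses (a) and (b) as trivial and proves (c) by expanding the product of the two signed sums using $\bg^S(x)\bg^S(y)=\bg^S(xy)$, grouping the $2k\ell$ positive and $2k\ell$ negative cross-terms exactly as in your sign count. Your explicit justification of (b) via $-X_\ell^S=X_\ell^S$ is a welcome detail the paper omits, but there is no substantive difference in approach.
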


\begin{proof}
Parts (a) and (b) are trivial. For (c),
\begin{align*}
&\left(\sum\limits_{i=1}^k\bg^S(x_i)-\sum\limits_{j=1}^k\bg^S(y_j) \right)\left(\sum\limits_{r=1}^\ell \bg^S(w_r)-\sum\limits_{s=1}^\ell \bg^S(z_s)\right)\\
&\hspace{1.5 cm}=\left(\sum\limits_{i=1}^k\sum\limits_{r=1}^\ell \bg^S(x_i w_r)+\sum\limits_{j=1}^k\sum\limits_{s=1}^\ell \bg^S(y_j z_s)\right)
\\&\hspace{2.5 cm}-\left(\sum\limits_{j=1}^k\sum\limits_{r=1}^\ell \bg^S(y_j w_r)+\sum\limits_{i=1}^k\sum\limits_{s=1}^\ell \bg^S(x_i z_s)\right),
\end{align*}
which is in $X_{2k\ell}^S$.
\end{proof}
\

\begin{proof}[Proof of Theorem~\ref{main-stronger}.] We use induction on $|S|$.

If $|S|=1$, i.e. $S = \{s\}$, $s\neq 0$, and $\{a^s\mid a\in K^*\} = \{a^{-s}\mid a\in K^*\}$,
so without loss of generality we may assume $s$ is a positive integer.
Let $\delta$ denote the difference operator, so $(\delta f)(x) = f(x+1) - f(x).$
By induction on $s$, we see
$$(\delta^{s-1}f)(x) = s!\,x+ \frac{(s-1) s!}2.$$
Thus,
$$s!\,x+ \frac{(s-1) s!}2 = \sum_{i=0}^{s-1} (-1)^i \binom {s-1}i (x+s-1-i)^s \in X_{2^{s-2}}^{S},$$
as long as $x\not\in \{0,-1,-2,\ldots,1-s\}$.  In particular, $K\setminus \bbz\subset X_{2^{s-2}}^{S}$.
Since $X_1^S$ is not contained in a single $\bbz$-coset of $K$ (for instance, it contains $0$ and $1-(1/2)^s$),
it follows that
$$K\subseteq X_1^S+X_{2^{s-2}}^{S} = X_{1+2^{s-2}}^{S}.$$

Suppose the claim of the theorem is true for some $S$, and let us prove it for $S\cup\{t\}$.  We know that for some $N$,
$$X_N^{S}= K^{S} \text{ and } X_N^{\{t\}}= K.$$

Let $p_k^{S,t}\colon X_i^{S\cup\{t\}}\to X_k^S$ and $q_k ^{S,t}\colon X_i^{S\cup\{t\}}\to X_k^{\{t\}}$ denote projection maps.  In particular, $p_k^{S,t}$
and $q_k ^{S,t}$ are surjective for $k\ge N$.

Then we have two cases:

Case 1. There exist $v, w\in X_{N+1}^{S\cup\{t\}}$ such that $v\neq w$ but
$$p_{N+1}^{S,t}(v)=p_{N+1}^{S,t}(w).$$

Case 2. No such $v, w\in X_{N+1}^{S\cup\{t\}}$ exist, or (equivalently), $p_{N+1}^{S,t}$ is injective (and therefore bijective).

\

In Case 1, $0\neq v-w\in X_{2N+2}^{S\cup\{t\}}$ and $p_{2N+2}^{S,t}(v-w)=0$.  Regarding $v-w$ as a function $f\colon S\cup\{t\}\to  K$, we have $f(s)=0$ for all $s\in S$ but $f(t)\neq 0$..
As $q_N^{S,t}$ maps onto $ K$, multiplying $X_N^{S\cup\{t\}}$ by $v-w\in X_{2N+2}^{S\cup\{t\}}$, the set $X_{2N(2N+2)}^{S\cup\{t\}}$ contains
all functions $S\cup\{t\}\to  K$ which vanish identically on $S$.
Since $X_N^{S}= K^{S}$, it follows that
$$X_{2N(2N+2)+N}^{S\cup\{t\}}= K^{S\cup\{t\}}.$$

It therefore suffices to consider Case 2.  Now $X_N^{S\cup\{t\}}\subseteq X_{N+1}^{S\cup\{t\}}$.
If equality does not hold, since $p_N^{S,t}$ is surjective, there exist $v\in X_N^{S\cup\{t\}}$ and $w\in X_{N+1}^{S\cup\{t\}}\setminus X_N^{S\cup\{t\}}$ with $p_{N+1}^{S,t}(v)=p_{N+1}^{S,t}(w)$.
This is impossible in Case 2.  Thus, $X_N^{S\cup\{t\}}= X_{N+1}^{S\cup\{t\}}$. Then, by Lemma~\ref{operation}, we have that
$$X_N^{S\cup\{t\}}+X_1^{S\cup\{t\}}=X_{N+1}^{S\cup\{t\}}=X_N^{S\cup\{t\}},$$
so by induction we have
\begin{align}\label{all}
\notag    X_k^{{S\cup\{t\}}}=X_N^{S\cup\{t\}}, \text{ for all } k\geq N.
\end{align}

We set $X^{S\cup\{t\}}:=X_N^{S\cup\{t\}}$ and write $p^{S,t}$ (resp. $q^{S,t}$) for the projection map $p_N^{S,t}$ (resp. $q_N^{S,t}$).
By Lemma~\ref{operation}, $X^{S\cup\{t\}}$ is closed under addition, subtraction and multiplication, i.e., it is a (possibly non-unital) ring.
As we are in Case 2,  $p^{S,t}$
is an isomorphism of non-unital rings, and it follows that $X^{S\cup\{t\}}$ is a unital ring and $p^{S,t}$ is an isomorphism of unital rings.

Now, $q^{S,t}\colon X^{S\cup\{t\}} \to X^{\{t\}} =  K$  is  a surjective non-unital homomorphism of rings and therefore a ring homomorphism.  Composing it  with the inverse of $p^{S,t}$, we obtain a
ring homomorphism  $\phi:  K^{S}\rightarrow  K$ which expresses the value of $f\in X^{S\cup\{t\}}$ at $t$ in terms of the restriction of $f$ to $S$.
Letting $e_s$ denote the idempotent of $ K^S$ which is $0$ on $S\setminus \{s\}$ and $1$ on $s$,
$\phi(e_s)\in \{0,1\}$ for all $s$, and as $\phi$ maps the multiplicative identity $\sum_{s\in S} e_s$ to $1$, it follows that $\phi(e_s) = 1$ for some $s\in S$.
This implies that $\phi(f) = \phi(f e_s)$ for all $f\in  K^S$, i.e., that the homomorphism $\phi$ factors through the projection $ K^S\to  K$ given by evaluation at $s$.
Thus, there exists an endomorphism $\psi$ of $K$  such that  $\phi(f) = \psi(f(s))$, and $X^{S\cup\{t\}}$ must consist of all functions $f\colon  S\cup\{t\}\to  K$ such that $f(t)=\psi(f(s))$.  This is absurd because, for instance, $f := \bg^{S\cup\{t\}}(2) - \bg^{S\cup\{t\}}(1)\in X^{S\cup\{t\}}$ satisfies
$$2^t-1 = f(t) = \psi(f(s)) = \psi(2^s-1) = 2^s-1$$
although $t>s$.
This completes the proof.

\end{proof}

\begin{cor}
\label{odd}
If $f(x)$ is an odd Laurent polynomial, then $f(x)$ satisfies WP.
\end{cor}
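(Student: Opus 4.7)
The plan is to combine Theorem~\ref{main-Laurent} with the symmetry of the value set $f(\bbq)$ that odd\-ness provides. Since $f(x)$ is an odd Laurent polynomial, we have $f(-a) = -f(a)$ for every $a\in \bbq^*$, so the set $f(\bbq)$ is stable under negation: $-f(\bbq) = f(\bbq)$. This is the key observation that lets us upgrade an EWP statement to a WP statement.

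With this in hand, the proof should proceed as follows. First, apply Theorem~\ref{main-Laurent} to $f$ to obtain a positive integer $N$ such that every element $c\in\bbq$ can be written in the form $\pm f(a_1) \pm f(a_2) \pm \cdots \pm f(a_N)$ for some $a_i\in\bbq^*$. Next, for each index $i$ for which the sign in front of $f(a_i)$ is negative, rewrite $-f(a_i) = f(-a_i)$ using oddness, and set $b_i := -a_i$; for the other indices, set $b_i := a_i$. Then each summand becomes $f(b_i)$ with $b_i\in\bbq^*$, so we obtain $c = f(b_1) + f(b_2) + \cdots + f(b_N)$. This exhibits $f(\bbq)$ as a base, which is the definition of $f$ satisfying WP.

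I do not expect any real obstacle: the entire content of the corollary lies in Theorem~\ref{main-Laurent}, and the passage from EWP to WP is a single rewriting step enabled by oddness. The only thing worth checking is that the substitution $a_i \mapsto -a_i$ stays inside $\bbq^*$, which is automatic since $\bbq^*$ is closed under negation and the evaluation points in Theorem~\ref{main-Laurent} range over $\bbq^*$ (so no poles of $f$ are accidentally introduced).
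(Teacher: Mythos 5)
Your proof is correct and is exactly the argument the paper intends (the corollary is stated without proof precisely because it follows from Theorem~\ref{main-Laurent} by the observation that oddness gives $-f(a)=f(-a)$, so $-f(\bbq)=f(\bbq)$ and the $\pm$ signs can be absorbed). No gaps.
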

\section{Waring's problem for polynomials over $\bbq$}

The main result in this section is the following:

\begin{theorem}
\label{Main-Poly}
Let $f(x)\in \bbq[x]$ be a non-constant polynomial.  If $f$ is of odd degree, it satisfies WP.  If $f$ is of even degree, $f(\bbq)$ is a positive base or a negative base, according to whether the leading coefficient of $f$ is positive or negative.
\end{theorem}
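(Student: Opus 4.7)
The plan is to exploit Theorem~\ref{main-Laurent} (which already gives $f$ the EWP) and upgrade as needed: to full WP when $\deg f$ is odd, and to a positive (or negative) base when $\deg f$ is even. By Lemma~\ref{obvious}(d), I may WLOG assume $f$ has positive leading coefficient $a_d$. For odd $d$, $f$ has a pole of odd order at $\infty$, so by Proposition~\ref{poles}(b) it suffices to show $f(\bbq)$ is an open base; combined with $f(\bbq)$ being unbounded below and Lemma~\ref{obvious}(c), this upgrades to WP. For even $d$ with positive leading coefficient, $f$ is bounded below so $f(\bbq)$ is not a negative base by Lemma~\ref{bounds}(b), and the goal is simply to show $f(\bbq)$ is a positive base.

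The odd case reduces to the even case via the polynomial identity $h_a(x) := f(x) + f(a - x)$: since $(-1)^d = -1$, the leading $a_d x^d$ terms of $f(x)$ and $f(a-x)$ cancel, making $h_a$ a polynomial of degree $d-1$ (even) in $x$, whose leading coefficient works out to $2 a_{d-1} + d a\, a_d$, which can be made positive by taking $a \in \bbq$ sufficiently large. Applying the even-degree case of the theorem to $h_a$, $h_a(\bbq)$ is a positive base, so $\sum_{i=1}^N h_a(x_i) \supset (C, \infty) \cap \bbq$ for some $N, C$. Since each $h_a(x_i) = f(x_i) + f(a - x_i)$ contributes two summands from $f(\bbq)$, this shows $\underbrace{f(\bbq) + \cdots + f(\bbq)}_{2N} \supset (C, \infty) \cap \bbq$, yielding a positive base for $f(\bbq)$ and hence WP via the mechanism of the previous paragraph.

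For the even case I would take a direct, Waring-theoretic approach. First, normalize $f$ by a translation (Corollary~\ref{compose}) so that its $x^{d-1}$ coefficient vanishes, writing $f(x) = a_d x^d + \tilde h(x)$ with $\deg \tilde h \leq d - 2$. The classical Waring theorem for rational $d$-th powers (every non-negative rational is a bounded sum of rational $d$-th powers, obtained from Hilbert's integer Waring theorem by clearing denominators) then realizes the dominant $a_d \sum x_i^d$ portion of a sum of $f$-values. Parameterizing inputs as $x_i = \lambda + t_i$ for a large $\lambda \in \bbq$, Taylor expansion gives $\sum f(\lambda + t_i) = N f(\lambda) + a_d \sum t_i^d + (\text{cross-terms involving } \sum t_i^k, \, k < d)$, and the Waring step provides freedom to set $\sum t_i^d$ to any sufficiently large non-negative rational.

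The main obstacle is precisely the even case: the lower-order cross-terms in the Taylor expansion of $\sum f(\lambda + t_i)$ and the remainder contribution from $\tilde h$ must be balanced so that the total sum lands at an exact prescribed rational $r$, rather than merely approximating it. I expect this requires careful choice of the parameter $\lambda$ and of the $t_i$'s (perhaps arranging $\sum t_i^k$ for intermediate $k$ to take specific values via Prouhet-Tarry-Escott-style constructions), together with a bounded number of auxiliary $f$-summands used to absorb any remaining discrepancy. The odd-degree reduction via $h_a$ is, by contrast, clean once the even case is in hand.
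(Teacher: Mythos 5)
Your reduction of the odd case to the even case via $h_a(x) := f(x) + f(a-x)$ is correct and is a clean alternative to the paper's route (the paper instead shows $f(\bbq)$ is an open base and invokes Proposition~\ref{poles}(b) using the odd-order pole at infinity; your use of Lemma~\ref{obvious}(c) together with unboundedness below works equally well). But the even case --- equivalently, showing that $f(\bbq)$ is an open or positive base at all --- is where the entire difficulty of the theorem lives, and your proposal does not prove it; you say as much yourself. Controlling $\sum t_i^d$ via Hilbert's theorem while the intermediate power sums $\sum t_i^k$, $1 \le k < d$, float freely cannot produce an \emph{exact} prescribed rational value: one must solve the full system $\sum_i x_i^k = c_k$, $k = 1, \ldots, d$, simultaneously in rationals for all $\bc$ in some open set. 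Prouhet--Tarry--Escott constructions make power sums \emph{equal to each other}, not equal to arbitrary prescribed rationals, so they do not close this gap, and ``a bounded number of auxiliary summands to absorb the discrepancy'' begs the question, since absorbing an arbitrary rational discrepancy with boundedly many values of $f$ is essentially the statement being proved.

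The paper's proof is devoted precisely to this point. It considers the variety $V^{\bc} \subset \bbp^m$ cut out by $\sum_i X_i^k = c_k X_0^k$ for $1 \le k \le d$, with $m > d$ variables, shows via the Vandermonde determinant that its singular locus has small dimension, cuts by generic rational hyperplanes (Bertini in Zariski's form) to obtain a smooth $W^{\bc}_{\mathbf{G}}$, produces real points for all $\bc$ in a non-empty open set $U$ by Proposition~\ref{smooth} and the implicit function theorem, $p$-adic points by Brauer's theorem, and finally rational points by the Browning--Heath-Brown theorem on forms in many variables of differing degrees. This simultaneous exact control of all $d$ power sums is the substance of the theorem, and your sketch leaves it unresolved.
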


Let $\bg\colon \bbq\to \bbq^d$ denote the map $\bg(x) = (x,x^2,\ldots,x^d)$.  We begin with the following lemma:

\begin{prop}
\label{smooth}
Let $m$, $d$, and $r$ denote positive integers, $m\ge d+r$.   Let $\mathbf{a} = (a_1,\ldots,a_m)\in \bbr^m$, and let
$\mathbf{F} = (F_1,\ldots,F_r)$ denote an $r$-tuple of linear forms in $\mathbf{x} = (x_1,\ldots,x_m)$.
For fixed $\mathbf{a}$ satisfying
\begin{equation}
\label{d-different}
\bigm|\!\{a_1,\ldots,a_m\}\!\bigm| \ge d,
\end{equation}
the set of $\mathbf{F}$
such that the
morphism $\bba^m\to \bba^{d+r} = \bba^d\times \bba^r$ given by
$$\mathrm{x}\mapsto \bigl(\sum_i \bg(x_i),\mathbf{F}(\mathrm{x})\bigr)$$
is smooth at $\mathbf{a}$ forms a dense open subset of the variety $\bba^{rm}$ of $r$-tuples of linear forms in $m$ variables.
\end{prop}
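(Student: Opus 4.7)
My plan is to verify smoothness by a Jacobian calculation and then show that the resulting condition on $\mathbf{F}$ cuts out a dense open subvariety of $\bba^{rm}$. Since source and target are both smooth and the target has dimension $d+r$, the morphism
$$\Phi_{\mathbf{F}}(\mathbf{x}) := \Bigl(\sum_i \bg(x_i),\,\mathbf{F}(\mathbf{x})\Bigr)$$
is smooth at $\mathbf{a}$ if and only if its Jacobian $J(\mathbf{a})$ has rank $d+r$. I will split this Jacobian as a block matrix
$$J(\mathbf{a}) = \begin{pmatrix} A \\ B \end{pmatrix},$$
where $A$ is the $d\times m$ block coming from $\sum_i \bg(x_i)$ and $B$ is the $r\times m$ block whose rows are the coefficient vectors of $F_1,\ldots,F_r$.

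The crucial rank input concerns $A$. Its $(k,j)$ entry is $ka_j^{k-1}$. Scaling the $k$-th row by $1/k$ transforms $A$ into a matrix whose $j$-th column is $(1,a_j,a_j^2,\ldots,a_j^{d-1})^{T}$; hypothesis (\ref{d-different}) furnishes $d$ indices $j$ for which these columns assemble into a nonsingular Vandermonde matrix, so $\operatorname{rank}(A)=d$.

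It remains to choose $B$ so that $J(\mathbf{a})$ has rank $d+r$. The condition $\operatorname{rank}(J(\mathbf{a}))=d+r$ amounts to the non-vanishing of at least one $(d+r)\times(d+r)$ minor of $J(\mathbf{a})$, which is a polynomial condition in the $rm$ coefficients parametrizing $\mathbf{F}$. Hence the good locus is Zariski open in $\bba^{rm}$, and to conclude density I only need to exhibit one $\mathbf{F}$ that works. Since the row span of $A$ has dimension $d$ in $\bbr^m$ and $m\geq d+r$, I can extend a basis of that span to a basis of some $(d+r)$-dimensional subspace of $\bbr^m$ and take the $r$ additional vectors as the rows of $B$; the resulting $J(\mathbf{a})$ has rank $d+r$. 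A non-empty Zariski open subset of an irreducible variety is automatically dense, giving the proposition.

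I do not anticipate a serious obstacle. The whole argument reduces to a rank computation whose only non-trivial ingredient is the Vandermonde identification pinning down $\operatorname{rank}(A)=d$, and the distinctness assumption (\ref{d-different}) is calibrated exactly for that step.
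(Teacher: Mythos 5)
Your proof is correct and follows essentially the same route as the paper: both arguments reduce smoothness at $\mathbf{a}$ to the Jacobian having rank $d+r$, use the Vandermonde determinant together with (\ref{d-different}) to see that the top $d\times m$ block has rank $d$, and then observe that a generic (equivalently, at least one, since the rank condition is open) choice of the $r\times m$ coefficient block brings the full matrix up to rank $d+r$. Your explicit row-scaling by $1/k$ and the explicit witness for non-emptiness via $m\ge d+r$ are details the paper leaves implicit, but the argument is the same.
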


\begin{proof}
By (\ref{d-different}), without loss of generality we may assume that $a_1,\ldots,a_d$ are pairwise distinct.
Thus the Vandermonde determinant
$$\det\begin{pmatrix}
1&1&\cdots&1 \\
a_1&a_2&\cdots&a_d \\
\vdots&\vdots&\ddots&\vdots \\
a_1^{d-1}&a_2^{d-1}&\cdots &a_d^{d-1} \\
\end{pmatrix}.$$
is non-zero, and
$$\begin{pmatrix}
1&1&\cdots&1 \\
a_1&a_2&\cdots&a_m \\
\vdots&\vdots&\ddots&\vdots \\
a_1^{d-1}&a_2^{d-1}&\cdots &a_m^{d-1} \\
\end{pmatrix}
$$
has rank $d$.  It follows that for a generic choice of $r\times m$ matrices $b_{ij}$,
the matrix
$$\begin{pmatrix}
1&1&\cdots&1 \\
a_1&a_2&\cdots&a_m \\
\vdots&\vdots&\ddots&\vdots \\
a_1^{d-1}&a_2^{d-1}&\cdots &a_m^{d-1} \\
b_{11}&b_{12}&\cdots&b_{1m} \\
\vdots&\vdots&\ddots&\vdots \\
b_{r1}&b_{r2}&\cdots&b_{rm} \\
\end{pmatrix}
$$
has rank $d+r$.  The proposition now follows from the Jacobian condition for smoothness.
\end{proof}

\begin{proof}[Proof of Theorem~\ref{Main-Poly}]
Let $d := \deg f$, and let $m$ be an integer greater than $d$.
Fix pairwise distinct  rational numbers $a_1,\ldots,a_m$, and let $b_j = \sum_i a_i^j$.  Let $\bb$ denote the vector $(b_1,\ldots,b_d)$.  Consider the closed subscheme $V$ of $\bbp^m$ over $\Spec \bbq[t_1,\ldots,t_d]$
defined by the system of $d$ homogeneous
equations
\begin{equation}
\begin{split}
\label{generic}
X_1+\cdots+X_m&= t_1X_0\\
X_1^2+\cdots+X_m^2&= t_2X_0^2 \\
\vdots\\
X_1^d+\cdots+X_m^d &= t_d X_0^d.\\
\end{split}
\end{equation}
Let $V^{\bc}$ denote the fiber of $V$ over $\bc = (c_1,\ldots,c_d)$.

Using the Jacobian criterion for smoothness and the Vandermonde determinant as before, $(X_0:\cdots:X_m)$ is a non-singular point
of $V^{\bc}$
as long as there are at least $d+1$ distinct values among $X_0,\ldots,X_m$.  Thus, for all $\bc$, the singular locus of $V^{\bc}$ has dimension at most $d-1$,
and $(1:a_1:\cdots:a_m)$ is a non-singular point of $V^{\bb}$.

By Bertini's theorem as formulated by Zariski \cite{Z}, the intersection of $V^{\bc}$ with a generic hyperplane $G_1 = 0$ in $\bbp^{m}$
can be singular only at a subvariety of
$V^{\bc}$ of dimension less than that of $\mathrm{Sing}( V^{\bc})$.  We may choose $G_1$ to have coefficients in $\bbq$ since the rational hyperplanes are dense in the projective space of all real hyperplanes in the real topology and therefore in the Zariski topology.  Iteratively choosing $G_2,\ldots, G_d$ generically, the intersection $W_{\mathbf{G}}^{\bc}$
of $V^{\bc}$ with the locus $G_1=\cdots=G_d=0$ is non-singular.

Writing $G_i = b_{i0}X_0+b_{i1}X_1+\cdots+b_{im}X_m$, we define $F_i := b_{i1}x_1+\cdots+b_{im}x_m$, where the $x_i := X_i/X_0$ are affine coordinates
on the affine open subset $\bba^m$ of $\bbp^m$ given by $X_0\neq 0$.  Fixing $b_{ij}$ for $j\ge 1$ and letting $b_{i0}$ vary, for a generic $d$-tuple
$(b_{10},\ldots,b_{d0})$ and generic $\bc$, $W_{\mathbf{G}}^{\bc}$ is non-singular; the complement of $X_0=0$ is then given by the system of equations
\begin{equation*}
\begin{split}
\label{generic}
x_1+\cdots+x_m&= c_1\\
x_1^2+\cdots+x_m^2&= c_2 \\
\vdots\\
x_1^d+\cdots+x_m^d &= c_d\\
F_1(x_1,\ldots,x_m) &= -b_{10} \\
\vdots\\
F_d(x_1,\ldots,x_m) &= -b_{d0} \\
\end{split}
\end{equation*}
Applying Proposition~\ref{smooth} and the implicit function theorem, we conclude that there is a $d$-tuple $\mathrm{G}$ and a non-empty open set $U\subset \bbr^d$
such that for all $\bc\in U$, $W^{\bc}_{\mathbf{G}}$ has a real point.

If $\dim W^{\bc}_{\mathbf{G}} = m-2d$ is large enough, by a theorem of Brauer \cite{Brauer}, $W^{\bc}_{\mathbf{G}}(\bbz_p) = W^{\bc}_{\mathbf{G}}(\bbq_p)$ is non-empty.  By a theorem of Timothy Browning and Roger Heath-Brown \cite[Theorem 1.7]{BH}, this implies that $W^{\bc}_{\mathbf{G}}(\bbq)\subset V^{\bc}(\bbq)$ is non-empty.

If $f(x) = a_0+a_1 x + \ldots+a_d x^d$,
$$\{a_0m+a_1 c_1 + \cdots + a_d c_d\mid \bc \in U\cap \bbq^d\}\subset \underbrace{f(\bbq)+\cdots+f(\bbq)}_m.$$
Thus $f(\bbq)$ is an open base.  If $d$ is odd, then $f(\bbq)$ is dense in $(-\infty, -B)\cup (B,\infty)$ for some $B$, and it follows as in the proof of Proposition~\ref{poles} that $f(\bbq)$ is a base.  If $d$ is even and $a_d>0$ (resp. $a_d < 0$), then $f(\bbq)$ is dense in $(B,\infty)$ (resp. $(-\infty, -B)$) for some $B$, and it follows that
$f(\bbq)$ is a positive (resp. negative) base.

\end{proof}

\begin{theorem}
If $f(x)\in\bbq(x)$ is a rational function of degree $2$, then we have the following:
\begin{enumerate}[(a)]
\item If $f$ has two distinct poles in $\bbq\bbp^1$, it satisfies WP.
\item If $f$ has one pole in $\bbq\bbp^1$,  it satisfies the EWP but not WP.
\item  If $f$ has no pole in $\bbq\bbp^1$, it does not satisfy even the EWP.
\end{enumerate}

\end{theorem}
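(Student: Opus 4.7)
The plan is to reduce each of the three cases to results already established in the paper, using Corollary \ref{compose} to move $\bbq$-rational poles to convenient locations via rational fractional linear transformations.

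For (a), I would first note that the two $\bbq$-rational poles of $f$ must be simple since $\deg f = 2$. Composing with a rational fractional linear transformation $g$ that sends the two poles to $0$ and $\infty$, the function $f\circ g$ is a degree-$2$ rational function with simple poles precisely at $0$ and $\infty$, so it has the form $ax + b + c/x$ with $a,c\in\bbq^*$. Translation by $-b$ leaves the property of being a base unchanged, and $ax + c/x$ is an odd Laurent polynomial, so Corollary \ref{odd} gives WP for $ax+c/x$, hence for $f\circ g$, hence for $f$ by Corollary \ref{compose}.

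For (b), I would similarly move the unique $\bbq$-rational pole to $\infty$, so that $f\circ g\in \bbq[x]$ has degree $2$. Theorem \ref{Main-Poly} then gives that $(f\circ g)(\bbq)$ is a positive or negative base according to the sign of the leading coefficient, so Lemma \ref{obvious}(b) yields EWP. Conversely, a degree-$2$ polynomial with positive leading coefficient is bounded below on $\bbr$, so $(f\circ g)(\bbq)$ is bounded below and cannot be a negative base by Lemma \ref{bounds}(b); hence it is not a base by Lemma \ref{obvious}(a). The negative leading coefficient case is symmetric, and the conclusions transfer to $f$ via Corollary \ref{compose}.

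For (c), both poles of $f$ lie in a nontrivial quadratic extension $K/\bbq$, and I would split on whether $K$ is imaginary or real quadratic. In the imaginary case, $f$ has no pole in $\bbr\bbp^1$, and Proposition \ref{poles}(c) rules out EWP. In the real case, I would find a prime $p$ such that the defining quadratic of $K$ remains irreducible over $\bbq_p$; for any such $p$, $f$ has no pole in $\bbq_p\bbp^1$, violating Proposition \ref{poles}(d). The existence of such inert primes (in density $1/2$) is the step closest to being a ``main obstacle,'' though it is classical and follows from elementary quadratic reciprocity or Chebotarev; no other step presents a real difficulty, as the theorem amounts to bookkeeping on top of the stronger results proved earlier.
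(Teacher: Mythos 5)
Your argument follows the paper's own proof closely for parts (a) and (c), and is correct there: part (a) is exactly the paper's reduction to the odd Laurent polynomial $ax+c/x$ via Corollaries \ref{compose} and \ref{odd}, and part (c) is the paper's $p$-adic obstruction via an inert prime and Lemma \ref{bounds}/Proposition \ref{poles}(d) (the paper does not bother with your real/imaginary case split, since Chebotarev supplies an inert prime for any quadratic field, but your extra use of the archimedean obstruction in the imaginary case is harmless and slightly more elementary there). The genuine divergence is in part (b): the paper does not invoke Theorem \ref{Main-Poly} at all, but instead normalizes $f\circ g$ to $x^2+d$ and uses the classical fact that every positive rational is a sum of four squares of nonzero rationals, which directly exhibits $(4d,\infty)\cap\bbq$ inside a fourfold sum and hence gives a positive base with an explicit bound; your route through Theorem \ref{Main-Poly} is valid but uses far heavier machinery (Bertini, Birch/Browning--Heath-Brown) for a case that is elementary. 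One small technical point you should patch in part (b): Corollary \ref{compose} as stated transfers only the property of being a \emph{base} under composition with a fractional linear transformation, whereas you need to transfer ``positive base'' (or at least ``virtual base'') and ``not a base'' from $f\circ g$ to $f$. The sets $(f\circ g)(\bbq)$ and $f(\bbq)$ differ in at most one element, so this is easily repaired --- the paper does it by observing that every value of $x^2+d$ other than $d$ is attained twice, so $f(\bbq)\supseteq h(\bbq)\setminus\{d\}$, and the four-squares argument never needs the value $d$ --- but an appeal to Corollary \ref{compose} alone does not literally cover it, and neither Proposition \ref{change} nor Lemma \ref{obvious} is stated for positive or virtual bases.
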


\begin{proof}
Let $g$ be a fractional linear transformation over $\bbq$ mapping $0$ and $\infty$ to the two poles of $f$.  Then $f(g(x))$ is a rational function of degree two with poles $0$ and $\infty$ and must therefore be of the form $\frac{ax^2+bx+c}{x}$.  Let $h(x) = f(g(x))-b = ax + c/x$.  By Corollary~\ref{odd}, there exists $N$ such that every rational number is a sum of $N$ terms in $h(\bbq)$.  It follows that every rational number is a sum of $N$ terms of $f(g(x))$.  Thus $h(\bbq)$ is a base, and by Corollary~\ref{compose}, the same is true of $f(\bbq)$.

For part (b), let $g$ be a fractional linear transformation over $\bbq$ mapping $\infty$ to the pole of $f$ (which must be double).  Then $h(x) = f(g(x)) = ax^2+bx+c$ for some $a,b,c\in \bbq$.  Rescaling and translating, we may assume $h(x)$ is of the form $x^2+d$.  Now, $h(\bbq)$ is bounded below, so  $h(\bbq)$ is not a base, and therefore that $f(\bbq)$ is not a base.  On the other hand, every value of $h(\bbq)$ except $d$ is achieved twice, so either $f(\bbq) = h(\bbq)$ or $f(\bbq) = h(\bbq)\setminus \{d\}$.
since every positive rational number is the sum of four squares of positive rationals, it follows that $h(\bbq)+h(\bbq)+h(\bbq)+h(\bbq)$ contains all rational numbers in $(4d,\infty)$.

For part (c), let $K$ be the quadratic extension of $\bbq$ generated by the poles of $f$. By Chebotarev density, there exists a prime $p$ such that the prime $p$ is inert in $K$ and therefore $f$ has no pole in $K$.  Thus $f(\bbq_p)$ is bounded, and by Lemma~\ref{bounds}, $f(\bbq)$ is not a virtual base.
\end{proof}

\end{document}